 \numberwithin{equation}{section}
\newtheorem{theorem}{Theorem}
\newtheorem{proposition}[theorem]{Proposition}
\newtheorem{lemma}[theorem]{Lemma}
\newtheorem{corollary}[theorem]{Corollary}
\newtheorem{satz}{Theorem}
\theoremstyle{remark}
\newtheorem{remark}{Remark}
\definecolor{orange}{rgb}{1,0.5,0}
\newcommand{\IN}{\mathbb{N}} % natural numbers
\newcommand{\lmr}{\text{lmr}} % logarithmic mapping radius
\newcommand{\lt}{\underline{t}} % time parameter 1
\newcommand{\gt}{\overline{t}} % time parameter 2
\newcommand{\ltau}{\underline{\tau}} % time parameter 3
\newcommand{\gtau}{\overline{\tau}} % time parameter 4
\newcommand{\D}{\mathbb{D}}
\newcommand{\N}{\mathbb{N}}
\newcounter{cntselflist}
\newenvironment{selflist}{\begin{list}{\mbox{}\hspace{0.5cm}\mbox{}\arabic{cntselflist})\mbox{ }}{
	\setlength\itemindent{0pt}
	\setlength\leftmargin{0pt}
	\setlength\itemsep{3pt}
	\setlength\labelwidth{0pt}
	\setlength\labelsep{0pt}
	\usecounter{cntselflist}
	}  }{\end{list}}
\begin{document}
\parindent 0pt

\setcounter{section}{0}

	 \author{Christoph B\"ohm and Sebastian Schlei\ss inger}
   \title{Constant Coefficients in the Radial Komatu-Loewner Equation for Multiple Slits}
   \date{\today}
   \maketitle

\begin{abstract}
      The radial Komatu-Loewner equation is a differential equation for certain normalized conformal mappings that can be used to describe the growth of slits within multiply connected domains. We show that it is possible to choose constant coefficients in this equation in order to generate given disjoint slits and that those coefficients are uniquely determined under a suitable normalization of the differential equation.
\end{abstract}

	% \email{}
	% \address{}

% 	\keywords{Komatu-Loewner Equation, Loewner theory, multiply connected domains, slit domains}
	%\subjclass{}

	%\thanks{}
	%\dedicatory{}

% 	\begin{abstract}
% 	In \cite{BoehmLauf}, C. B\"ohm and W. Lauf derived a radial Komatu-Loewner differential equation for multiple slits. We show that it is possible to choose constant coefficients in this equation in order to generate given disjoint slits and that those coefficients are uniquely determined under a suitable normalization of the differential equation.
% 	\end{abstract}

	\maketitle
	
	\section{Introduction and results}%%%%%%%%%%%%%%%%%%%%%%%%%%%%%%%%%%%%%%%%%%%%%%%%%%%%%%%%%%%%%%%%%%

In 1923, C. Loewner derived a differential equation for a certain family of conformal mappings to 
attack the Bieberbach conjecture,
% and managed to prove the case $n=3$
see \cite{Loewner:1923}. Loewner's method has been extended and turned out to be a useful tool 
within complex analysis. In particular, the Loewner differential equations provide a powerful 
tool for the description of the growth of slits in a given planar domain. 
After O. Schramm discovered Stochastic Loewner Evolution (or Schramm Loewner Evolution, SLE) 
in \cite{MR1776084}, it became clear that those models have many applications in different 
mathematical and physical disciplines, especially in statistical physics.\\

In the classical setting, Loewner theory describes the evolution of a family of simply connected, 
proper subsets of the complex plane $\mathbb{C},$ which are all conformally equivalent to the 
unit disc $\D:=\{z\in\mathbb{C} \;|\; |z|<1\}$ according to the Riemann Mapping Theorem. 
In 1943, Y. Komatu showed that Loewner's ideas are not confined to the simply connected case: 
He derived a Loewner equation for the growth of a slit within a doubly connected domain, 
see \cite{KomatuZweifach}. In \cite{Komatu}, he considered a generalization of Loewner's 
differential equation to a more general finitely connected domain. Komatu's ideas have been applied and 
extended by several authors. Recently, R. Bauer and R. Friedrich derived a radial and a 
chordal Komatu-Loewner equation to deal with the growth of a (stochastic) slit in a multiply 
connected domain, see \cite{BauerFriedrichCSD} and \cite{BauerFriedrichCBC}. In the radial
setting, the slit grows within a \textit{circular slit disk} $D$, i.e. 
$D= \D \setminus (C_1\cup...\cup C_N)$, $N\in \N_0,$ where each 
$C_j\subset\D$ is a circular arc centered at $0$ such that 
$C_j\cap C_k=\emptyset$ whenever $j\not=k.$ Note that every $N$-connected domain $\Omega$ can 
be mapped onto such a circular slit disk $D$ by a conformal map $f: \Omega \rightarrow D.$
This mapping is unique if we require the normalization $f(z_0)=0,$ $f'(z_0)>0$ for some 
$z_0\in \Omega,$ see \cite{ConwayII}, Chapter 15.6.\\

In \cite{BoehmLauf}, W. Lauf and the first author generalized the radial Komatu-Loewner 
equation for the growth of several slits:\\
Let $\Omega$ be an arbitrary circular slit disk and  let 
$\gamma_1,...,\gamma_m:[0,T]\to\overline{\Omega}$ be parametrizations of pairwise disjoint simple 
curves such that $\gamma_k(0)\in\partial \D$ and $\gamma_k(0,T]\subset 
\Omega\setminus\{0\}$ for all $k=1,...,m.$ \\
Furthermore, if $D$ is a circular slit disk and $u\in\partial \D$, we denote by 
$w\mapsto \Phi(u,w;D)$ the unique conformal mapping from $D$ onto the right half-plane minus 
slits parallel to the imaginary axis with $\Phi(u,u;D)=\infty$ and $\Phi(u,0;D)=1.$\\
We summarize one of the main results of \cite{BoehmLauf} in the following theorem:
\begin{satz}[Corollary 5 in \cite{BoehmLauf}] \label{The:KomLowEqu}
	Let $\Omega_t=\Omega \setminus \bigcup_{k=1}^m \gamma_j[0,t]$ and denote by $g_t$ the unique 
	conformal mapping $g_t: \Omega_t \to D_t$ where $D_t$ is a circular slit disk 
	and $g_t(0)=0,$ $g'_t(0)>0.$\\
	Then there exists a Lebesgue measure zero set $\mathcal{N}$ such that for every 
	$z\in \Omega_T$ the function $t\mapsto g_t(z)$ is differentiable on 
	$[0,T]\setminus \mathcal{N}$ with 
	\begin{equation}\label{KLE} 
		\dot{g}_t(z) = g_t(z) \sum_{k=1}^m \lambda_k(t)\cdot \Phi(\xi_k(t),g_t(z);D_t),
	\end{equation}
	where the continuous function $t\mapsto\xi_k(t)\in \partial\D$ is the image of 
	$\gamma_k(t)$ under the map $g_t$ and the coefficient functions $t\mapsto \lambda_k(t)$ are measurable with $\lambda_k(t)\geq 0$ for every 
	$t\in[0,T]$.
\end{satz}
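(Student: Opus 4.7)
The plan is to derive the equation by analyzing the semigroup of conformal transitions $\varphi_{s,t}=g_t\circ g_s^{-1}$, extending the Bauer--Friedrich single-slit strategy by allocating the capacity produced on an interval $(s,t)$ among the $m$ boundary points $\xi_k(s)$. First I would establish basic regularity: Carath\'eodory convergence $\Omega_s\to\Omega_t$ as $s\to t$ together with the uniqueness of the normalization yields that $t\mapsto g_t$ is continuous locally uniformly on $\Omega_T$, that $\xi_k(t)=g_t(\gamma_k(t))\in\partial\D$ depends continuously on $t$, and that the moduli of the circular slit disks $D_t$ vary continuously. One should also show that $t\mapsto \log g_t'(0)$ is continuous and non-decreasing; this ``total capacity'' is what will eventually be allocated to the $\lambda_k$.

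Next I would analyze the transition map $\varphi_{s,t}\colon D_s\setminus K_{s,t}\to D_t$, where $K_{s,t}=\bigcup_k g_s(\gamma_k(s,t])$ is a disjoint union of $m$ small slits emanating from the $\xi_k(s)$. The key ingredient is a Schwarz--Poisson type integral representation in the multiply connected setting: for $s$ and $t$ close, the first-order expansion reads
\[
\varphi_{s,t}(w)-w = w\sum_{k=1}^m\mu_k(s,t)\,\Phi(\xi_k(s),w;D_s)+o(t-s)
\]
uniformly on compact subsets of $D_s$, with weights $\mu_k=\mu_k(s,t)\ge 0$. Here $\Phi(u,\cdot;D_s)$ is the Bauer--Friedrich kernel, characterized by mapping $D_s$ conformally to the slit half-plane with a pole at $u$ and the normalizations $\Phi(u,u;D_s)=\infty$, $\Phi(u,0;D_s)=1$; it plays the role of the classical Schwarz kernel $(u+w)/(u-w)$ and its real part is the harmonic measure of the prime end $u$, which is why $\mu_k\ge 0$ is forced by the one-sided growth $\gamma_k[0,s]\subset\gamma_k[0,t]$.

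Given this expansion I would set $\lambda_k(t)=\limsup_{s\uparrow t}\mu_k(s,t)/(t-s)$, which is measurable and nonnegative by construction. Since $\sum_k\mu_k(s,t)$ reproduces, to leading order, the capacity increment $\log g_t'(0)-\log g_s'(0)$, Lebesgue's differentiation theorem ensures that the $\lambda_k$ are finite and realise the time-derivative outside a common null set $\mathcal{N}$; dividing the displayed identity by $t-s$ and passing to the limit, while using the continuity of $\Phi$, $\xi_k$ and $D_t$ in $t$, yields \eqref{KLE} on $[0,T]\setminus\mathcal{N}$.

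The main obstacle is the absence of an a priori monotone parametrization for the multi-slit case. In the single-slit setting one reparametrizes by capacity and obtains $\lambda\equiv 1$, but with $m$ slits possibly growing at independent, degenerate or oscillating rates the best one can hope for is measurable coefficients and almost-everywhere differentiability. The delicate technical step is controlling how the capacity increment is distributed among the $m$ boundary points $\xi_k(s)$ and ensuring that $\Phi(u,\cdot;D)$ depends continuously enough on both $u$ and on the domain $D$ (via the variations of the moduli) for the first-order expansion to hold uniformly, so that the pointwise limit producing \eqref{KLE} is legitimate.
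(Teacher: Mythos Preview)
First, note that the paper does not prove this statement: Theorem~A is simply quoted as Corollary~5 of \cite{BoehmLauf}, and the present paper only \emph{uses} it, together with the construction summarized in Proposition~\ref{Pro:ProCalLam} (whose proof again just points to \cite{BoehmLauf}), as a black box. So there is no in-paper argument to compare your sketch against.

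Your outline nonetheless has a genuine gap at the passage from the total capacity to the individual $\lambda_k$. Lebesgue's differentiation theorem, applied to the monotone function $t\mapsto\log g_t'(0)$, tells you that the \emph{sum} $\sum_k\mu_k(s,t)/(t-s)$ converges for a.e.\ $t$ as $s\uparrow t$; it says nothing about each summand separately. Defining $\lambda_k$ as a $\limsup$ gives a nonnegative measurable function, but you then cannot ``divide the displayed identity by $t-s$ and pass to the limit'' term by term, because you have not shown that the individual limits $\lim_{s\uparrow t}\mu_k(s,t)/(t-s)$ exist on any set of full measure. (A related issue: since the parametrizations $\gamma_k$ are arbitrary, the remainder in your expansion is a priori $o$ of the capacity increment, not $o(t-s)$.) What is missing is precisely an allocation mechanism that makes each $\mu_k$ the increment of its own monotone or Lipschitz function. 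One can read off from Proposition~\ref{Pro:ProCalLam} how \cite{BoehmLauf} supplies this: for each $k$ one builds a ``partial capacity'' $c_k(t)$ as the limit of the Riemann-type sums $S_k(a,b,t,Z)$ over partitions (on each subinterval freeze all slits except the $k$-th), proves that every $c_k$ is increasing and Lipschitz with $\sum_k c_k(t)=\log g_t'(0)$, and only then sets $\lambda_k:=\dot c_k$, which now exists a.e. Your sketch jumps over exactly this allocation step.
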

\begin{remark} The continuous functions 
	$\xi_k:[0,T]\to\partial\D$ are usually called \emph{driving functions}. \\
	Informally, the coefficient function $\lambda_k(t)$ corresponds to the speed
	of growth of the slit parametrized by $\gamma_k.$ From the normalization $\Phi(u,0;D)=1$ it follows that $g'_t(0)=e^{\int_0^t \sum_{k=1}^m \lambda_k(\tau)\; d\tau}.$ \\

\end{remark}

\begin{figure}[h]
	\centering \includegraphics[width=135mm]{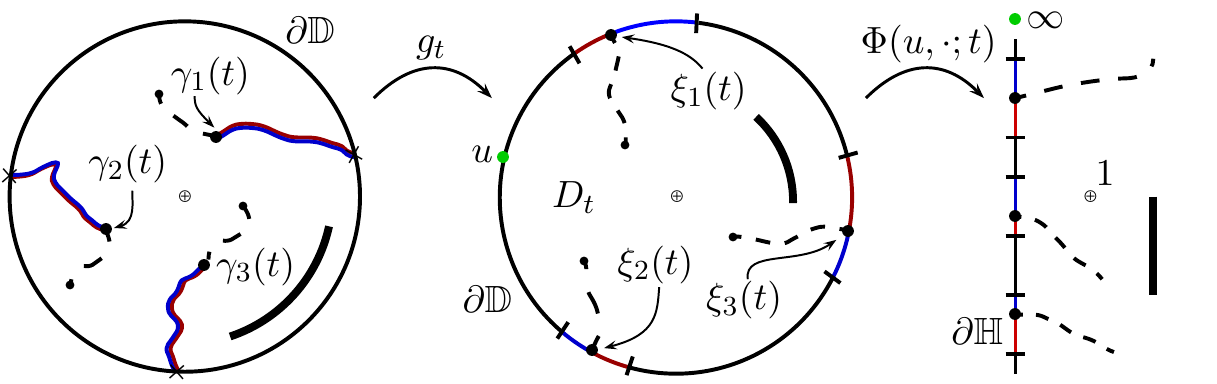}
	\caption{The mappings $z\mapsto g_t(z)$ and $w\mapsto \Phi(u,w,D_t)$ from Theorem \ref{The:KomLowEqu}.}
\end{figure}

Note that there are no further assumptions on the parametrizations of the slits 
$\Gamma_k:=\gamma_k[0,T]$ in Theorem \ref{The:KomLowEqu}. \\
Now suppose we are given only the circular slit disk $\Omega$ and the slits 
$\Gamma_1,...,\Gamma_m$ without parametrization. Roughly speaking, we address the question if it is possible to find a simple form of equation (\ref{KLE}) such that it has still enough parameters to generate the slits $\Gamma_1,...,\Gamma_m$, but, on the 
other hand, the choice of those parameters is unique. We will see that this is possible and, 
moreover, it will turn out that equation (\ref{KLE}) is satisfied for \textit{all} $t$ 
in this case.

 The latter can be interpreted as a generalization of Loewner's original idea of finding a parametrization of an arbitrary curve such that the family of certain associated conformal mappings is differentiable. In some sense, this problem is related to Hilbert's fifth problem of finding differentiable structures for continuous groups, see \cite{Goodman}.

\medskip

In a first step, we can choose parametrizations such that the mappings $g_t$ satisfy $g_t'(0)=e^t,$ i.e. $\sum_{k=1}^m \lambda_k(t)\equiv 1.$ However, there are many of such parametrizations when $m\geq 2$.  

 In this work we will show that there exist unique parametrizations of the slits 
$\Gamma_1,...,\Gamma_m$, such that the corresponding Komatu-Loewner equation is satisfied 
for all $t$ where all coefficient functions $\lambda_k(t)$ are constant and sum up to 1.

\medskip

We need one further notation: 
Let $\Omega$ be a circular slit disc and let $f:\Omega\setminus(\Gamma_1\cup\ldots\cup\Gamma_m)
\to D$ be the unique conformal mapping onto a
circular slit disk $D$ with $f(0)=0$ and $f'(0)>0$, then $f'(0)> 1$ and the 
\emph{logarithmic mapping radius} of $\Omega\setminus(\Gamma_1\cup\ldots\cup\Gamma_m)$ is 
defined to be the real number $\log f'(0)> 0$. The inequality $f'(0)>1$ is 
an immediate consequence of Lemma \ref{Lem:PropLMR} b). 

\begin{theorem} \label{The:ConCoeffi}
	Let $L$ be the logarithmic mapping radius of $\Omega\setminus(\Gamma_1\cup\ldots\cup\Gamma_m)$. 
	There exist unique continuous parametrizations 
	$$
		\delta_1:[0,L]\to\Gamma_1,..., \delta_m:[0,L]\to\Gamma_m
	$$
	and unique $\lambda_1,\ldots,\lambda_m\in(0,1)$ with $\sum_{k=1}^m\lambda_k=1$ such that 
	the following holds: \\
	Let $\Omega_t=\Omega \setminus \bigcup_{k=1}^m \delta_j[0,t]$ and denote by $h_t$ the unique 
	conformal mapping $h_t: \Omega_t \to D_t$ where $D_t$ is a circular slit disk and 
	$h_t(0)=0,$ $h'_t(0)>0.$\\
	Then $h_t'(0)=e^t$ and for every $z\in \Omega_L,$ the function $t\mapsto h_t(z)$ is 
	differentiable on $[0,L]$ with
	\begin{equation}\label{KLE2}
		\dot{h}_t(z) = h_t(z)\sum_{k=1}^m \lambda_k \cdot \Phi(\xi_k(t),h_t(z);D_t), 
	\end{equation}
	where $\xi_k(t)\in \partial\D$ is the image of $\delta_k(t)$ under the map $h_t$. 
	Moreover, the driving functions $t\mapsto \xi_k(t)$ are continuous.
	% $u,v:[0,\tilde T]\rightarrow [0,T]$ 
	% 	and a unique $\lambda>0$ so that 
	% 	\[
	% 		\dot h_t(z) = h_t(z)\Big( \lambda\Phi\big(\xi_1(t),h_t(z);t \big) + 
	% 			(1-\lambda)\Phi\big(\xi_2(t),h_t(z);t \big)\Big)
	% 	\]
	% 	holds, where $h_t$ denotes the unique normalized mapping function corresponding to 
	% 	$\gamma_1\big(u(t)\big)$ and $\gamma_2\big(v(t)\big)$.
\end{theorem}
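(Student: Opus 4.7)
The plan is to combine Theorem~\ref{The:KomLowEqu} with a degree-theoretic argument on the open simplex $\Delta = \{\lambda \in (0,1)^m : \sum_k \lambda_k = 1\}$. The first preliminary step is a time change. For any continuous parametrizations $\tilde\delta_k : [0,T] \to \Gamma_k$ of the slits, Theorem~\ref{The:KomLowEqu} yields coefficient functions $\tilde\lambda_k(t) \geq 0$ and the normalization $g'_t(0) = \exp\!\bigl(\int_0^t \sum_k \tilde\lambda_k(\tau)\,d\tau\bigr)$. Reparametrizing by the strictly increasing function $u(t) = \int_0^t \sum_k \tilde\lambda_k(\tau)\,d\tau$ transforms (\ref{KLE}) into a KLE whose coefficients sum to $1$ a.e., and for which $h'_u(0) = e^u$. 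Because $\Omega_T = \Omega \setminus \bigcup_k \Gamma_k$ has logarithmic mapping radius $L$, the new time parameter runs over $[0, L]$, showing that any candidate parametrization in Theorem~\ref{The:ConCoeffi} must be defined exactly on $[0, L]$.

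For the existence of constant coefficients, I would fix $\lambda \in \Delta$ and construct parametrizations $\delta_k^{\lambda} : [0, L] \to \overline{\Gamma_k}$ driving the KLE~(\ref{KLE2}). The tip $\delta_k^{\lambda}(t)$ is the preimage under $h_t^{\lambda}$ of the driving value $\xi_k^{\lambda}(t) \in \partial\D$, so the whole problem is equivalent to solving for driving functions $\xi_k^{\lambda}$ such that the induced tips trace the prescribed curves $\Gamma_k$. Local existence and uniqueness of $\xi_k^{\lambda}$ given $\lambda$ should follow from the single-slit fact that a slit determines its driving function uniquely; the constant-coefficient KLE is a quasi-autonomous system once $\lambda$ is fixed, and we can build the $\xi_k^{\lambda}$ incrementally by matching tips. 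Let $s_k(\lambda) \in [0, 1]$ denote the fraction of $\Gamma_k$ consumed by time $L$ along this solution (extending $\delta_k^{\lambda}$ by constancy past its natural end if slit $k$ finishes before $L$). This yields a continuous map $s : \Delta \to [0, 1]^m$, and I would check the boundary behavior: as $\lambda_k \to 0$ the slit $\Gamma_k$ grows too slowly, so $s_k(\lambda) \to 0$, while as $\lambda_k$ approaches $1$ almost all capacity is spent on $\Gamma_k$, forcing $s_j(\lambda) < 1$ for some $j \neq k$. A Brouwer-type argument then produces $\lambda^{*} \in \Delta$ with $s(\lambda^{*}) = (1, \dots, 1)$.

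For uniqueness, I would establish monotonicity of $s_k$ with respect to $\lambda_k$: increasing $\lambda_k$ in the constant-coefficient equation strictly increases the share of capacity allocated to $\Gamma_k$ in $[0, L]$, so the equation $s(\lambda) = (1, \dots, 1)$ has at most one solution in $\Delta$. Uniqueness of the $\delta_k = \delta_k^{\lambda^{*}}$ then follows from forward uniqueness of the KLE once the driving functions are fixed. The identity $h_t'(0) = e^t$ is immediate from $\sum_k \lambda_k = 1$, and the differentiability of $t \mapsto h_t(z)$ on all of $[0, L]$ (not just off a null set) follows because the right-hand side of (\ref{KLE2}) is genuinely continuous in $t$ once the coefficients are constant and the driving functions $\xi_k$ are continuous. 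The main obstacle I anticipate is Step~2 together with the boundary analysis in the degree argument: controlling $s_k(\lambda)$ as one of the coordinates of $\lambda$ degenerates, and verifying that the constant-coefficient KLE can indeed be driven so that it follows the prescribed curves $\Gamma_k$ for the entire interval $[0, L]$ and not merely for short times.
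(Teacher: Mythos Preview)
Your plan takes a genuinely different route from the paper. The paper's existence proof is constructive: for each $n$ it builds discrete ``Bang--Bang'' parametrizations by alternately growing slit~1 until $\lmr$ increases by $\lambda/n$, then slit~2 until $\lmr$ increases by $(1-\lambda)/n$; an intermediate-value argument produces a $\lambda_n$ for which both slits finish simultaneously, and a diagonal-subsequence extraction yields the limiting $(\lambda_0,u,v)$. The correct coefficient is not located by any topological argument on the simplex but falls out of compactness, after which Proposition~\ref{Pro:ProCalLam} and the sum estimates for $S_1,S_2$ identify $c_j(t)=\lambda_j t$.

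The central gap in your plan is Step~2. You assume that for each fixed $\lambda\in\Delta$ one can construct parametrizations $\delta_k^{\lambda}$ so that the KLE holds with exactly those constant coefficients, at least until one slit is exhausted. But this is the analytic heart of the theorem, and ``build the $\xi_k^{\lambda}$ incrementally by matching tips'' does not address it. By Proposition~\ref{Pro:ProCalLam}, the coefficient $\lambda_1(t)$ (in the two-slit case) is the derivative at $t$ of $s\mapsto\lmr(a(s),b(t))$. Forcing this derivative to equal a prescribed constant amounts to solving a differential relation for $(a,b)$ in terms of $\lmr$; however $\lmr$ is only known to be continuous and strictly monotone (Lemma~\ref{Lem:PropLMR}), not differentiable, so no such ODE is well-posed a priori. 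The paper replaces differentiability by the quotient estimate of Lemma~\ref{Lem:PropLMR}(c) together with the discrete construction; some analogous device would be needed in your scheme before the map $s:\Delta\to[0,1]^m$ is even defined, let alone continuous. A secondary issue: once a slit is exhausted and you freeze $\delta_k^{\lambda}$, the equation no longer has constant coefficient $\lambda_k$ there, so the definition and boundary behaviour of $s$ near $\partial\Delta$ need separate care.

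For uniqueness the paper does not use a global monotonicity of $s$; it argues by contradiction at the first time $t_0>0$ where two candidate parametrizations agree, comparing the left difference quotients of $\lmr$ via Proposition~\ref{Pro:ProCalLam} to force $\lambda_1\le\lambda_2$. Your monotonicity idea is morally similar but again presupposes that the map $\lambda\mapsto\delta^{\lambda}$ has been rigorously constructed.
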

% {\color{orange}
% \begin{remark}
% 	Note that the normalization $h'_t(0)=e^t$ follows directly from the identity 
% 	$\sum_{k=1}^m\lambda_k=1$ and equation (\ref{KLE2}) together with the normalization 
% 	$\Phi(u,0;t)=1.$
% \end{remark}}
% \begin{remark}\label{reformulate}
% From the uniqueness of the coefficients $\lambda_1,\ldots,\lambda_n$ and the fact that $\xi_k(t)=h_t(\delta_k(t))$ it follows that the following formulation is equivalent to Theorem \ref{The:ConCoeffi}:\\
%  There exist unique $\lambda_1,\ldots,\lambda_n\in(0,1)$ and unique continuous driving functions $\xi_1,\ldots,\xi_n:[0,T]\to\partial\D$ such that the following holds: Let $D_t$ be the circular slit disk ??? and let $h_t$ be the solution to equation (\ref{KLE2}). Then $h_T$ is the conformal mapping from $\Omega\setminus(\Gamma_1\cup\ldots\cup\Gamma_n)$ onto a circular slit disk with $h_T(0)=0,$ $h_T'(0)>0.$ 
% \end{remark}

In the simply connected case, we have $D_t=\D$ for all $t$ and equation (\ref{KLE2}) can be 
written down explicitly as 
$$
	\Phi(u,w,\D)=\frac{u+w}{u-w}.
$$

Furthermore, $m$ coefficients $\lambda_1,...,\lambda_m\in(0,1)$ and $m$ continuous driving 
functions $\xi_1,...,\xi_m:[0,L]\to\partial\D$ determine the unique solution to 
equation (\ref{KLE2}) in the simply connected case. 
If the solution generates slit mappings, then the parametrization of these slits are 
uniquely determined by the driving functions and the coefficients.\\

Thus we can formulate the simply connected case of Theorem \ref{The:ConCoeffi} as follows.
\begin{corollary}
	Let  $\Gamma_1,\ldots,\Gamma_m$ be disjoint slits in $\D$ and let $L$ be the logarithmic 
	mapping radius of $\D\setminus(\Gamma_1\cup\ldots\cup\Gamma_m)$. 
	Then there exist unique $\lambda_1,...,\lambda_m\in(0,1)$ and unique continuous 
	driving functions $\xi_1,\ldots,\xi_m:[0,L]\to\partial\D$ such that the solution of the 
	Loewner equation 
	\begin{equation*}
		\dot{h}_t(z) = h_t(z)\sum_{k=1}^m \lambda_k \cdot \frac{\xi_k(t)+h_t(z)}{\xi_k(t)-h_t(z)}, 
		\qquad h_0(z)=z,
	\end{equation*}
	generates the slits $\Gamma_1,\ldots,\Gamma_m,$ i.e. $h_L$ maps 
	$\D\setminus(\Gamma_1\cup\ldots\cup\Gamma_m)$ conformally onto $\D$.
\end{corollary}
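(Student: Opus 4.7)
My plan is to reduce the problem to an autonomous ODE on a finite-dimensional parametrization box. First, parametrize each $\Gamma_k$ individually by its own logarithmic mapping radius: let $\gamma_k:[0,L_k]\to\Gamma_k$ be continuous so that the logarithmic mapping radius of $\Omega\setminus\gamma_k[0,s]$ equals $s$, where $L_k$ denotes the logarithmic mapping radius of $\Omega\setminus\Gamma_k$. On the box $\mathcal{T}:=[0,L_1]\times\cdots\times[0,L_m]$ introduce the joint quantity
\[
  \rho(s_1,\ldots,s_m):=\text{logarithmic mapping radius of }\Omega\setminus\bigcup_{k=1}^{m}\gamma_k[0,s_k].
\]
This function is continuous, satisfies $\rho(0)=0$ and $\rho(L_1,\ldots,L_m)=L$, and is $C^{1}$ on the interior with strictly positive partials $\mu_k:=\partial\rho/\partial s_k>0$; strict positivity follows from the fact that shrinking a domain strictly increases its logarithmic mapping radius at $0$ (a Schwarz-type principle for circular slit disks).

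\textbf{From constant coefficients to an autonomous ODE.} Given any absolutely continuous $s:[0,L]\to\mathcal{T}$ with $s(0)=0$, Theorem~\ref{The:KomLowEqu} applied to the joint parametrizations $t\mapsto\gamma_k(s_k(t))$ yields the coefficient functions $\lambda_k(t)$ of~\eqref{KLE}. Differentiating $\rho(s(t))$ and comparing with the one-slit specialization (where the reference parametrization forces the coefficient to be identically $1$) gives the identity $\lambda_k(t)=\mu_k(s(t))\,\dot{s}_k(t)$. Hence requiring each $\lambda_k$ to be a constant is equivalent to solving the autonomous ODE
\[
  \dot{s}_k(t)=\lambda_k\,/\,\mu_k\bigl(s(t)\bigr),\qquad s(0)=0,
\]
whose solutions automatically satisfy $\rho(s(t))=t$ and thus $h_t'(0)=e^{t}$.

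\textbf{Picking out the unique $\lambda$.} For each $\lambda$ in the open simplex $\Delta:=\{\lambda\in(0,1)^{m}:\sum_{k}\lambda_k=1\}$, the ODE admits a unique maximal solution with first exit time $T^{\ast}(\lambda)\le L$ from $\mathcal{T}$; by strict monotonicity of $\rho$, the given slits are fully realized precisely when this exit point coincides with the corner $\ell:=(L_1,\ldots,L_m)$. The problem therefore reduces to showing that the exit-point map $\Psi:\Delta\to\partial\mathcal{T}$ hits $\ell$ at exactly one $\lambda$. I would prove existence by a Brouwer-type argument on the $(m-1)$-dimensional simplex together with boundary behavior: as $\lambda_k\to 0$, the $k$-th slit grows too slowly and the trajectory exits $\mathcal{T}$ through a face with $s_k<L_k$, ruling out $\ell$, so continuity and degree theory force some interior $\lambda$ to map to $\ell$. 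Uniqueness I would obtain from a transversality argument on the foliation of $\mathcal{T}$ by the level sets of $\rho$: distinct $\lambda,\lambda'\in\Delta$ induce vector fields with different directions on each such level set, so their trajectories cannot both issue from $0$ and terminate at $\ell$.

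\textbf{Regularity and the main obstacle.} Once the right $\lambda$ is identified, $\delta_k(t):=\gamma_k(s_k(t))$ is continuous, the right-hand side of~\eqref{KLE2} becomes continuous in $t$, and so $t\mapsto h_t(z)$ is differentiable on the entire interval $[0,L]$ while the driving functions $\xi_k(t)=h_t(\delta_k(t))$ are continuous. The main obstacles I anticipate are the uniqueness step above and the verification of the regularity and strict positivity of $\mu_k$ up to $\partial\mathcal{T}$; both ultimately hinge on a quantitative understanding of how the logarithmic mapping radius of a circular slit disk reacts to perturbations of its slits.
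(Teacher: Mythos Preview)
Your strategy is genuinely different from the paper's and is closer in spirit to Prokhorov's original argument (see the Remark following the Corollary): the paper does \emph{not} set up an autonomous ODE on a parameter box. Instead it builds approximating ``bang-bang'' parametrizations by alternately growing the slits in discrete increments of logarithmic mapping radius, extracts a limit by compactness, and identifies the coefficients via the integral quantities $c_j(t)=\lim_{|Z|\to 0}S_j(u,v,t,Z)$. Uniqueness is then obtained by a first-crossing comparison: if $\lambda_1>\lambda_2$, the two parametrizations must agree at some first time $t_0>0$, and comparing the one-sided derivatives of $t\mapsto\lmr(u_j(t),v_j(t_0))$ at $t_0$ forces $\lambda_1\le\lambda_2$. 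No differentiability of $\rho$ in the $s$-variables is ever invoked; only continuity, strict monotonicity, and the uniform ratio estimate of Lemma~\ref{Lem:PropLMR}(c) are used.

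This is exactly where your plan has a genuine gap. You assert that $\rho$ is $C^{1}$ on the interior of $\mathcal{T}$ with $\mu_k=\partial\rho/\partial s_k>0$, and the entire ODE reformulation rests on this. For arbitrary (merely continuous) slits there is no reason to expect $\rho$ to be differentiable in $s_k$: the single-slit Loewner parametrization by half-plane capacity or logarithmic mapping radius normalizes the coefficient of \emph{that} slit to $1$, but it says nothing about the partial derivative of the joint quantity with the other slits frozen. Lemma~\ref{Lem:PropLMR}(c) is precisely a substitute for differentiability, and the whole machinery of the paper is built to avoid the assumption you are making. So as written your argument would only recover the piecewise-analytic case already handled by Prokhorov, not the general statement.

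Your uniqueness sketch is also not sound as stated. That the vector fields $V_\lambda$ and $V_{\lambda'}$ point in different directions at every point of a level set of $\rho$ does not by itself prevent both integral curves from connecting $0$ to $\ell$; two distinct flows can easily share a pair of endpoints. What actually works is the first-crossing argument above (or, in your language, a monotone comparison in one coordinate at the first time the trajectories meet), and carrying that out rigorously again requires either the $C^1$ regularity you have not established or the discrete machinery the paper develops.
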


\begin{remark}
	In \cite{Prokhorov:1993}, D. Prokhorov has proven the existence and uniqueness of constant 
	coefficients for several slits in the simply connected case under the assumption that all 
	slits are \emph{piecewise analytic}. This theorem forms the basis for Prokhorov's study of 
	extremal problems for univalent functions in \cite{Prokhorov:1993} by using control-theoretic
	methods. 
	Our proof shows that one can drop any assumption on the regularity of the slits in order 
	to generate them with constant coefficients. 
\end{remark}

We shall give the details of the proof only in the case $m=2$, i.e.~for two slits, 
in order to allow a simple notation. The general case of $m \ge 2$ slits can be proved
inductively in exactly the same way.\\
Our proof combines some technical tools from \cite{BoehmLauf} that were used to prove 
Theorem \ref{The:KomLowEqu} and an idea from \cite{RothSchl}, where a similar result was
proven for the so called chordal Loewner equation in the simply connected case.\\ 
The rest of this paper is organized as follows. 
In Section 2 we describe the setting for the proof and cite some technical results 
from \cite{BoehmLauf}.
The proof of Theorem \ref{The:ConCoeffi} is divided into two parts: In Section 3 we prove 
the existence statement and in Section 4 we give the proof of the uniqueness statement 
of Theorem \ref{The:ConCoeffi}. 

	\section{The setting for the proof}

Suppose $\Omega$ is a circular slit disk and $\Gamma_1$, $\Gamma_2$ are disjoint slits, 
i.e. there are continuous, one-to-one functions $\gamma_1, \gamma_2:[0,T]\to\overline{\Omega}$ 
with $\Gamma_k=\gamma_k[0,T]$ such that $\Gamma_1\cap\Gamma_2=\emptyset$, 
$\gamma_k(0)\in\partial \D$ and $\gamma_k(0,T]\subset \Omega\setminus\{0\}$ for $k= 1,2.$ \\
Let the function $g_t$ be defined as in Theorem \ref{The:KomLowEqu}. 
As $g'_t(0)$ is monotonically increasing (see Lemma \ref{Lem:PropLMR}) and $g'_0(0)=1$, 
we can assume without loss of generality that $g'_t(0)=e^t.$ 
Otherwise, we can simultaneously reparameterize the two slits. 
Consequently we have $T=L$, where $L$ denotes the logarithmic mapping radius of 
$\Omega\setminus(\Gamma_1\cup\Gamma_2)$.\\

Furthermore, we assume that $L=1$ in order to simplify some of the notations.\\

For every $t,\tau\in[0,1]$ we let $f_{t,\tau}$ be the unique conformal mapping from 
$\Omega\setminus(\gamma_1[0,t]\cup \gamma_2[0,\tau])$ onto a circular slit disk with 
$f_{t,\tau}(0)=0$ and $f_{t,\tau}'(0)>0,$ and we define
$$ 
	\lmr(t,\tau):=\log(f'_{t,\tau}(0)). 
$$

Now, in order to prove Theorem \ref{The:ConCoeffi}, we have to show that there exist
\begin{itemize}
	\item two uniquely determined increasing homeomorphisms $u,v:[0,1]\to[0,1]$ and
	\item a uniquely determined $\lambda_0\in(0,1),$
\end{itemize}
such that the Komatu-Loewner equation for the slits $\delta_1:=\gamma_1\circ u$ and 
$\delta_2:=\gamma_2\circ v$ is satisfied for all $t\in[0,1]$ with $\lambda_1(t)= \lambda_0$ 
and $\lambda_2(t)=1-\lambda_0$ and $\lmr(u(t),v(t))=t$ for all $t\in[0,1].$\\

% One of the basic tools for our proof is the following proposition.
First, we summarize some basic properties of $\lmr$ in the following lemma.

\begin{lemma}${}$\label{Lem:PropLMR}
\begin{enumerate}
\item[(a)] \label{Pro:ComConf} The function $\lmr(t,\tau)$ is continuous in $[0,1]^2$.
\item[(b)] \label{Pro:Monfunf} The function $lmr(t,\tau)$ is strictly increasing with respect to $t$ and $\tau$ respectively.
 \item[(c)]  \label{Pro:DifQuoAbs2} For every $\epsilon>0$ there exists a $\delta>0$ so that for all 
	$0\leq \lt < \gt \leq 1$ and $0\leq \ltau <\gtau \leq 1$ with $|\lt-\gt|<\delta$ and $|\ltau-\gtau|<\delta$
	the following holds:
	\[
		1-\epsilon <\frac{\lmr(\lt,\ltau)-\lmr(\gt,\ltau)}{\lmr(\lt,\gtau)-
		\lmr(\gt,\gtau)} < 1+\epsilon.
	\]
\end{enumerate}
\end{lemma}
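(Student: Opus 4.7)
Parts (a) and (b) should follow from tools established in \cite{BoehmLauf}; part (c) is the crux. For (a), fix $(t,\tau)\in[0,1]^2$ and let $(t_n,\tau_n)\to(t,\tau)$. Continuity and injectivity of $\gamma_1,\gamma_2$ together with the disjointness of $\Gamma_1,\Gamma_2$ imply that the domains $\Omega\setminus(\gamma_1[0,t_n]\cup\gamma_2[0,\tau_n])$ converge to $\Omega\setminus(\gamma_1[0,t]\cup\gamma_2[0,\tau])$ in the Carath\'eodory kernel sense with basepoint $0$. The Carath\'eodory kernel theorem adapted to uniformization by circular slit disks (available via \cite{BoehmLauf}) then yields locally uniform convergence $f_{t_n,\tau_n}\to f_{t,\tau}$ and, in particular, $f'_{t_n,\tau_n}(0)\to f'_{t,\tau}(0)$, giving continuity of $\lmr$.

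For (b), it suffices to treat strict monotonicity in $t$. Applying Theorem \ref{The:KomLowEqu} to the one-parameter family of slit extensions $\gamma_1[0,\cdot]$ inside the fixed domain $\Omega\setminus\gamma_2[0,\tau]$ yields a driving coefficient $\lambda\ge 0$ that cannot vanish on any subinterval of positive length (otherwise the slit would fail to truly extend, contradicting the injectivity of $\gamma_1$); the identity $g'_t(0)=e^{\int_0^t\lambda(s)\,ds}$ recorded after Theorem \ref{The:KomLowEqu} then forces $\lmr(\underline{t},\tau)<\lmr(\overline{t},\tau)$ whenever $\underline{t}<\overline{t}$.

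For (c), decompose $f_{\overline{t},\tau}=F^{(\tau)}\circ f_{\underline{t},\tau}$, where $F^{(\tau)}$ is the normalised conformal mapping of the pointed circular slit disk $D_{\underline{t},\tau}$ with the short slit $\sigma^{(\tau)}:=f_{\underline{t},\tau}(\gamma_1(\underline{t},\overline{t}])$ removed, onto another circular slit disk. The chain rule gives
$$\lmr(\overline{t},\tau)-\lmr(\underline{t},\tau)=\log(F^{(\tau)})'(0),$$
so the ratio in (c) becomes $\log(F^{(\underline{\tau})})'(0)/\log(F^{(\overline{\tau})})'(0)$. As $\overline{\tau}\to\underline{\tau}$, both the host circular slit disks $D_{\underline{t},\underline{\tau}},D_{\underline{t},\overline{\tau}}$ and the short slits $\sigma^{(\underline{\tau})},\sigma^{(\overline{\tau})}$ converge (in the kernel/Hausdorff sense), so by (a) applied to this auxiliary short-slit problem, the numerator and denominator each converge to a common limit.

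The real difficulty is \emph{uniformity}: both $\log(F^{(\cdot)})'(0)$ themselves tend to $0$ when $\overline{t}-\underline{t}$ is small, so a mere additive continuity bound in $\tau$ is useless; what is required is a multiplicative comparison $|\mathrm{ratio}-1|<\epsilon$ that is uniform over all configurations with $|\underline{t}-\overline{t}|,|\underline{\tau}-\overline{\tau}|<\delta$. Establishing this demands a sharp asymptotic expansion of the mapping-radius increment under short-slit additions --- essentially $\log(F^{(\tau)})'(0)\sim c(\tau)\,\ell(\tau)^2$ for a length scale $\ell(\tau)$ of $\sigma^{(\tau)}$ and a local coefficient $c(\tau)$, with $c,\ell$ jointly continuous in $\tau$ uniformly on $[0,1]$. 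This quantitative short-slit expansion, which is where the technical estimates from \cite{BoehmLauf} should be indispensable, is the main obstacle.
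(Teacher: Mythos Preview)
The paper does not actually prove this lemma: its entire proof reads ``See Proposition 6, 8 and 15 in \cite{BoehmLauf}.'' So the benchmark you are being compared against is a pure citation. Your sketches for (a) and (b) go beyond what the paper offers, and your outline for (c) ultimately defers to \cite{BoehmLauf} as well, so in substance the two are aligned: both treat the lemma as an import.

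That said, two remarks on your sketches. Your argument for (b) is circular as written. You invoke Theorem~\ref{The:KomLowEqu} and then claim $\lambda$ cannot vanish on a subinterval because ``the slit would fail to truly extend''; but all this says is that if $\lambda\equiv 0$ on $[t_1,t_2]$ then $g_t$, and hence $g_t'(0)$, is constant there --- which is precisely the failure of strict monotonicity you are trying to rule out. Knowing that the \emph{domain} changes (since $\gamma_1$ is injective) does not by itself force $g_t'(0)$ to change; you need an independent monotonicity principle for the mapping radius under domain inclusion (a Schwarz-lemma/subordination argument for circular slit disks), which is exactly what Proposition~8 in \cite{BoehmLauf} supplies. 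For (c), you have correctly isolated the genuine obstruction --- the required bound is multiplicative and must be uniform while both numerator and denominator tend to zero --- but you have not proved it; your final paragraph is an honest statement that the short-slit expansion with uniformly continuous coefficients is what is needed and that \cite{BoehmLauf} is where to find it. This is consistent with the paper, which likewise does not reproduce that estimate.
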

\begin{proof}
	See Proposition 6, 8 and 15 in \cite{BoehmLauf}.
\end{proof}

% Furthermore, we will need the continuity of the function $(t,\tau)\mapsto lmr(t,\tau)$.
% \begin{proposition}[Proposition 6 in \cite{BoehmLauf}] 
%
% \end{proposition}

Furthermore, we will use a dynamic interpretation of the coefficient functions from 
Theorem \ref{The:KomLowEqu}. Let $a,b:[0,1]\to[0,1]$ be arbitrary strictly increasing
homeomorphisms and let $Z=\{t_0,\ldots,t_s\}$ be a partition of the interval $[0,t]$ 
for a fixed $t\in(0,1]$, i.e. $0=t_0<t_1<...<t_s=t.$ We will denote by 
$|Z|:= \max_{j\in\{0,\ldots,s-1\}}(t_{j+1}-t_j)$ the norm of $Z.$ Now we define the two sums
\begin{align*}
	S_1(a,b,t,Z)&:= \sum_{l=0}^{s-1} \big[\lmr(a(t_{l+1}),b(t_{l})) - \lmr(a(t_{l}),b(t_l))\big],\\
	S_2(a,b,t,Z)&:= \sum_{l=0}^{s-1} \big[\lmr(a(t_l),b(t_{l+1})) - \lmr(a(t_{l}),b(t_l))\big].
\end{align*}

The following proposition relates the limit of $S_j$ for $|Z|\to0$ to the coefficient 
functions $\lambda_j$ of Theorem \ref{The:KomLowEqu}.

\begin{proposition} \label{Pro:ProCalLam}
	Let $a,b:[0,1]\to[0,1]$ be two increasing self-homeomorphisms and let $g_t(z)$ 
	denote the conformal mapping for the slits $\gamma_1\circ a$ and 
	$\gamma_2\circ b$ from Theorem \ref{The:KomLowEqu}. Assume that
	$g'_t(0)=e^t$, i.e. $\lmr(a(t),b(t))=t,$ for all $t\in[0,1]$. Then the limits 
	\[
		c_1(t):=\lim_{|Z|\rightarrow 0} S_1(a,b,Z,t)\quad \text{and} 
		\quad c_2(t):=\lim_{|Z|\rightarrow 0} S_2(a,b,Z,t)
	\]
	exist and form two increasing and Lipschitz continuous functions
	$c_1,c_2:[0,1]\rightarrow[0,\infty)$ with $c_1(0)=c_2(0)=0$ and $c_1(t)+c_2(t)=t$.
	Furthermore, if $c_j$ is differentiable in $t_0$ for $j=1$ and $j=2,$ then 
	the differential equation (\ref{KLE}) holds for $g_{t_0}$ with 
	$$
		\lambda_j(t_0) = \dot c_j(t_0).
	$$
	In this case, $\lambda_1(t_0)$ is equal to the first derivative of the 
	function $t\mapsto\lmr(a(t),b(t_0))$ in $t_0$ and
	$\lambda_2(t_0)$ is equal to the first derivative of the function 
	$t\mapsto\lmr(a(t_0),b(t))$ in $t_0$.
\end{proposition}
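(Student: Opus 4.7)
Write $F(s,\tau) := \lmr(a(s),b(\tau))$ and view $S_1(Z,t)$, $S_2(Z,t)$ as Riemann-type partial sums approximating a \emph{mixed integral} of $F$ along the diagonal $\tau = s$. The crucial analytic input throughout is Lemma~\ref{Lem:PropLMR}(c), which tells us that partial increments of $F$ in one variable depend only weakly on the value fixed for the other variable, provided all underlying intervals are short.

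\textbf{Step 1 (existence and $c_1(t)+c_2(t)=t$).} Start from the telescoping identity
\begin{equation*}
  t \;=\; \sum_{l=0}^{s-1}\bigl[F(t_{l+1},t_{l+1}) - F(t_l,t_l)\bigr],
\end{equation*}
and split each diagonal difference into the $a$-increment $F(t_{l+1},t_l) - F(t_l,t_l)$ (which is the $l$-th summand of $S_1$) and the $b$-increment $F(t_{l+1},t_{l+1}) - F(t_{l+1},t_l)$. Lemma~\ref{Lem:PropLMR}(c) lets me replace the latter, where the $a$-argument is $t_{l+1}$, by the corresponding $b$-increment at $a$-argument $t_l$ up to a multiplicative factor $1\pm \epsilon$ whenever $|Z|<\delta$; summing gives $|t - S_1(Z,t) - S_2(Z,t)| \leq \epsilon t$. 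A completely parallel refinement argument --- subdivide each $[t_l,t_{l+1}]$ and again use Lemma~\ref{Lem:PropLMR}(c) to transfer the $b$-arguments of the new summands back to $t_l$ --- yields $|S_1(Z'',t) - S_1(Z,t)| \leq \epsilon t$ for any refinement $Z''$ of $Z$ with $|Z| < \delta$. Comparing two arbitrary partitions of norm $<\delta$ via their common refinement produces a Cauchy property, hence convergence to $c_1(t)$; the same works for $c_2$, and passing to the limit in the first inequality gives $c_1(t)+c_2(t)=t$.

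\textbf{Step 2 (basic properties and the derivative formula).} Nonnegativity of each summand (Lemma~\ref{Lem:PropLMR}(b)) yields $c_j \geq 0$ and $c_j(0)=0$; monotonicity follows from the fact that partitions of $[0,t_2]$ containing $t_1$ exhibit $c_j(t_2)-c_j(t_1)$ as a limit of non-negative sums; Lipschitz continuity of each $c_j$ with constant $1$ is then forced by combining $c_1+c_2=t$ with the monotonicity of both. If $c_1$ is differentiable at $t_0$, I would specialise the refinement argument of Step~1 to the single subinterval $[t_0,t_0+h]$ to obtain
\begin{equation*}
  c_1(t_0+h) - c_1(t_0) \;=\; F(t_0+h,t_0) - F(t_0,t_0) + O(\epsilon_h\cdot h),
\end{equation*}
with $\epsilon_h\to 0$ as $h\to 0$; dividing by $h$ and letting $h\to 0$ gives the desired identity $\dot c_1(t_0) = (d/dt)|_{t=t_0}\,\lmr(a(t),b(t_0))$, and the symmetric formula for $c_2$.

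\textbf{Step 3 (identification $\lambda_j = \dot c_j$).} This is where I expect the main obstacle to lie, because the coefficients $\lambda_k$ in Theorem~\ref{The:KomLowEqu} are defined only implicitly and only almost everywhere through equation~(\ref{KLE}). The plan is to appeal to the proof of Theorem~\ref{The:KomLowEqu} in \cite{BoehmLauf}, where $\lambda_k$ is produced through exactly the same limiting procedure as $c_k$ above; this identifies $c_k(t) = \int_0^t \lambda_k(s)\,ds$, so that at every common differentiability point $t_0$ of $c_1, c_2$ --- a set of full measure by the Lipschitz property --- one may realise $\lambda_k(t_0)=\dot c_k(t_0)$. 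Consistency with the normalisation $g_t'(0)=e^t$ and $\Phi(\xi,0;D_t)=1$, which force $\sum_k\lambda_k = 1$ a.e., matches the relation $\dot c_1+\dot c_2=1$ already established in Step~1.
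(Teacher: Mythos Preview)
Your argument is essentially correct, and considerably more detailed than the paper's own treatment, which disposes of the proposition in a single sentence by citing Proposition~16, Theorem~2, and the definition of $\lambda_j$ from \cite{BoehmLauf}. In other words, the paper does not give a self-contained proof at all; what you have written in Steps~1 and~2 is presumably a reconstruction of (part of) the content of those cited results.

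Your Step~1 and Step~2 are sound. The refinement/Cauchy argument using Lemma~\ref{Lem:PropLMR}(c) is exactly the right mechanism, and you correctly observe that the lemma applies to $F(s,\tau)=\lmr(a(s),b(\tau))$ via the uniform continuity of the homeomorphisms $a,b$. The derivation of the derivative identity $\dot c_1(t_0)=\tfrac{d}{dt}\big|_{t=t_0}\lmr(a(t),b(t_0))$ from the estimate $c_1(t_0+h)-c_1(t_0)=F(t_0+h,t_0)-F(t_0,t_0)+o(h)$ is also correct and, incidentally, shows that differentiability of $c_1$ at $t_0$ \emph{forces} differentiability of $t\mapsto\lmr(a(t),b(t_0))$ there.

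In Step~3 you are right to flag that the identification $\lambda_j(t_0)=\dot c_j(t_0)$ cannot be completed without input from \cite{BoehmLauf}: the coefficients $\lambda_k$ in Theorem~\ref{The:KomLowEqu} are not defined intrinsically in the present paper but are produced in the course of the proof in \cite{BoehmLauf}, precisely via the limits you call $c_k$. So both your proof and the paper's ultimately rest on the same external reference for this last step; the difference is that you have unpacked the preliminary analysis, which makes the logical structure more transparent.
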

\begin{proof}
	This follows immediately from Proposition 16 in \cite{BoehmLauf}, Theorem 2 
	in \cite{BoehmLauf} and the definition of $\lambda_j$ in \cite{BoehmLauf}.
\end{proof}

Beside $S_1$ and $S_2$ we define for a partition $Z=\{t_0,\ldots,t_s\}$ of the interval $[0,t]$
\begin{align*}
	\tilde S_1(a,b,t,Z)&:= \sum_{l=0}^{s-1} \big[\lmr(a(t_{l+1}),b(t_{l+1})) - 
		\lmr(a(t_{l}),b(t_{l+1}))\big],\\
	\tilde S_2(a,b,t,Z)&:= \sum_{l=0}^{s-1} \big[\lmr(a(t_{l+1}),b(t_{l+1})) - 
		\lmr(a(t_{l+1}),b(t_l))\big].
\end{align*}
If $g'_t(0)=e^t$ holds for every $t\in[0,1]$, then it is easy to see that $S_1(a,b,t,Z)+
\tilde S_2(a,b,t,Z)=t$ and $S_2(a,b,t,Z) + \tilde S_1(a,b,t,Z)=t$. 
By Proposition \ref{Pro:ProCalLam} we see
\[
	\lim_{|Z|\rightarrow 0} \tilde S_1(a,b,Z,t) = t-c_2(t)=c_1(t),\quad
	\lim_{|Z|\rightarrow 0} \tilde S_2(a,b,Z,t) = t-c_1(t)=c_2(t).
\]

	\section{Existence}
Now we are able to prove the existence part of Theorem \ref{The:ConCoeffi}. 
Recall that we have to show the existence of two strictly increasing homeomorphisms 
$u,v:[0,1]\to[0,1]$ and a $\lambda_0\in(0,1),$
such that the Komatu-Loewner equation for the slits $\gamma_1\circ u$ and 
$\gamma_2\circ v$ is satisfied for all $t\in[0,1]$ with $\lambda_1(t)= \lambda_0$ 
and $\lambda_2(t)=1-\lambda_0$ and $\lmr(u(t),v(t))=t$ for all $t\in[0,1].$

The proceeding of this proof is as follows.
\begin{enumerate} 
	\item First of all we will use a Bang-Bang method introduced in \cite{RothSchl} to construct 
		two sequences $(u_n)_{n\in\IN}$ and $(v_n)_{n\in\IN}$ of increasing self-homeomorphisms 
		of $[0,1]$.
	\item By using a diagonal argument on $u_n$ and $v_n$ we will find two subsequences 
		$(u_n^*)_{n\in\IN}$ and $(v_n^*)_{n\in\IN}$ which converge pointwise on a dense set 
		$S\subset [0,1]$ to increasing functions $u$ and $v$ respectively. 
		The functions $u$ and $v$ can be extended to continuous functions 
		defined on $[0,1]$, with $u(1)=1=v(1)$. 
		Furthermore, we will get $\lambda_0\in [0,1]$ by the construction of $u$ and $v$.
	\item Next we will derive a connection between the sum $S_1(u_n^*,v_n^*,t,Z)$ 
		and the sum $S_1(u,v,t,Z)$ for a given partition $Z$ of the interval $[0,t]$.
	\item Moreover, we will find a connection between $S_1(u_n^*,v_n^*,t,Z)$ and $\lambda_0$.
	\item By combining these results we will find $S_1(u,v,t,Z)\rightarrow \lambda_0 t$ if 
		$|Z|\rightarrow 0$. Furthermore, as a consequence of this, we will find $\lambda_0\in(0,1)$.
	\item Next will show that $u$ and $v$ are strictly increasing, i.e. both functions are 
		increasing self-homeomorphisms of $[0,1].$ 
	\item Finally we will obtain the Komatu-Loewner-Equation with constant coefficients 
		$\lambda_0$ and $1-\lambda_0$ for the parametrizations $u$ and $v$.
% 		by using {\color{blue}Verweis auf Theorem 2 aus \cite{BoehmLauf}}.
\end{enumerate}
\begin{proof}[Proof of Theorem \ref{The:ConCoeffi} (Existence)]
% 	Without restricting generality we can assume $g'_t(0)= e^t$ and $T=1$.
${}$
	\begin{selflist}
		\item To construct $u_n$ and $v_n$, we first extend both $\gamma_1$ and $\gamma_2$ to an interval 
			$[0,T^*],$ $T^*>1,$ such that $\gamma_1[0,T^*]$ and $\gamma_2[0,T^*]$ are still 
			disjoint slits and $\lmr(T^*,0)\geq1,$ $\lmr(0,T^*)\geq 1.$
			Let $n\in \IN$ and $\lambda\in[0,1]$.
			We let $t_{0,n}=\tau_{0,n}=0$ and for $k\in\{1,\ldots, n\}$ we define $t_{k,n}>0$ 
			and $\tau_{k,n}>0$ recursively as the unique values with 
			$$ 
				\lmr(t_{k,n}, \tau_{k-1,n})-\lmr(t_{k-1,n},\tau_{k-1,n})=
				\frac{\lambda}{n}, \quad \lmr(t_{k,n}, \tau_{k,n})-\lmr(t_{k,n}, \tau_{k-1,n})=
				\frac{1-\lambda}{n}. 
			$$

			Since $(t,\tau)\mapsto\lmr(t,\tau)$ is strictly increasing in both variables,  
			see Lemma \ref{Lem:PropLMR} b), we get
			\begin{align*}
				\lmr(t_{n,n},\tau_{n,n})&=1 \le \lmr(T^*,0) < \lmr(T^*,\tau_{n,n})\\
				\lmr(t_{n,n},\tau_{n,n})&=1 \le \lmr(0,T^*) < \lmr(t_{n,n},T^*).
			\end{align*}
			Consequently $t_{n,n},\tau_{n,n}\le T^*$.\\
			Furthermore, note that the values $t_{k,n}=t_{k,n}(\lambda)$ and 
			$\tau_{k,n}=\tau_{k,n}(\lambda)$ depend continuously on $\lambda$:
			This follows easily by induction and the continuity  
			and strict monotonicity of the function
			$(t,\tau)\mapsto \lmr(t,\tau)$, see Lemma \ref{Pro:Monfunf}.
			Consequently, for every $n\in\IN$, we can find a value $\lambda_n\in(0,1)$ with 
			$t_{n,n}(\lambda_n)=1$.
			Now we define a sequence of functions $u_n:[0,1]\rightarrow [0,t_{n,n}]$ and
			$v_n:[0,1]\rightarrow [0,\tau_{n,n}]$. Define
			\[
				u_n\Big(\frac{k}{2^n}\Big) := t_{k,2^n}(\lambda_{2^n}), \quad 
				v_n\Big(\frac{k}{2^n}\Big) := \tau_{k,2^n}(\lambda_{2^n})
			\]
			for all $k=0,\ldots,2^n$. The values of $u_n$ and $v_n$ between the supporting 
			points are defined by linear interpolation. An immediate consequence of this 
			construction is
			\begin{align} \label{Equ:1}
				\lmr\bigg(u_n\Big(\frac{k}{2^n}\Big),v_n\Big(\frac{k}{2^n}\Big) \bigg)
				=\lmr\big( t_{k,2^n}(\lambda_{2^n}),\tau_{k,2^n}(\lambda_{2^n}) \big)=\frac{k}{2^n}.
			\end{align}
		\item 
			% {\color{red} Next we will show that the sequence of functions $(u_n)_{n\in\IN}$ and 
			% $(v_n)_{n\in\IN}$ can be used to derive continuous functions $u$, $v$.} 
			Since $\lambda _{2^n}$ is bounded, we find a subsequence 
			$(m_{k,0})_{k\in\N}$ 
			such that $(\lambda_{2^{m_{k,0}}})_{k\in\N}$ is convergent with 
			the limit $\lambda_0\in[0,1]$.
			Next we set
			\[
				S:=\bigcup_{n=1}^\infty S_n, \quad  
				S_n:=\Big\{ \frac{k}{2^n}\, \big|\, k=0,\ldots, 2^n\Big\}.
			\]
			$S$ is a dense and countable subset of $[0,1]$. 
			% {\color{red} For $t\in S_n$ we set 
			% $S_n(t):=S_n\cap[0,t]$, so $S_n(t)$ is a partition of the interval $[0,t]$. [[ Not needed yet? ]]}
			Denote by $a:\IN\rightarrow S$ a bijective mapping.\\
% 			\begin{itemize} \setlength{\labelsep}{0.1cm}
			Since the sequences $(u_{m_{k,0}}(a_1))_{k\in\IN}$ and  $(v_{m_{k,0}}(a_1))_{k\in\IN}$ 
			are bounded (by $T^*$),	we find a subsequence $(m_{k,1})_{k\in\IN}$ 
			of $(m_{k,0})_{k\in\IN}$, so that $(u_{m_{k,1}}(a_1))_{k\in\IN}$ and
			$(v_{m_{k,1}}(a_1))_{k\in\IN}$ are convergent.
			
			Inductively, we define $(m_{k,l})_{k\in\N}$, $l\in\N$, to be a subsequence of 
			$(m_{k,l-1})_{k\in\N}$ such that $(u_{m_{k,l}}(a_l))$ and $(v_{n_{k,l}}(a_l))$ 
			are convergent.

			Consequently we can define sequences
			$u_n^*:= u_{m_{n,n}}$ and $v_n^*:= v_{m_{n,n}}$ which are (pointwise) 
			convergent in $S$.
			We denote by $u$ and $v$ the limit function, i.e.
			\[
				u(t):= \lim_{n\rightarrow \infty} u^*_n(t),\quad 
				v(t):= \lim_{n\rightarrow \infty} v^*_n(t)
			\]
			for all $t\in S$. Moreover we set $\lambda_n^*:=\lambda_{2^{m_{n,n}}}$
			and $S^*_n:=S_{m_{n,n}}$.
			By using equation (\ref{Equ:1}) we get 
			$\lmr\big(u_n^*(t),v_n^*(t)\big)=t$ for $t\in S$ if $n$ is big 
			enough. Consequently we find by using Lemma \ref{Pro:ComConf} a)
			\begin{align} \label{Equ:2}
				\lmr\big(u(t),v(t)\big)= 
				\lim_{n\rightarrow\infty} \lmr\big(u_n^*(t),v_n^*(t)\big) = t
			\end{align}
			for all $t\in S$.
			Furthermore, since $t\mapsto u_n^*(t)$ and $t\mapsto v_n^*(t)$ are strictly
			increasing, the functions $t\mapsto u(t)$ and $t\mapsto v(t)$ are increasing too.
			Moreover $u$ and $v$ can be extended in a continuous and unique way to [0,1]. 
			To see this, let $t_0\in(0,1)$ and define 
			\[
				t_1:=\lim_{t\nearrow t_0 \atop t\in S} u(t),\quad
				t_2:=\lim_{t\searrow t_0 \atop t\in S} u(t),\quad
				\tau_1:=\lim_{t\nearrow t_0 \atop t\in S} v(t),\quad
				\tau_2:=\lim_{t\searrow t_0 \atop t\in S} v(t).
			\]
			 Thus we find by Lemma \ref{Pro:ComConf}  a) and equation (\ref{Equ:2})
			\[
				\lmr(t_1,\tau_1)=\lim_{t\nearrow t_0\atop t\in S} \lmr(u(t),v(t)) = t_0
				=\lim_{t\searrow t_0\atop t\in S} \lmr(u(t),v(t)) = \lmr(t_2,\tau_2).
			\]
			Since $(t,\tau)\mapsto\lmr(t,\tau)$ is strictly increasing in both variables
			and $t_1\le t_2$ and $\tau_1\le\tau_2$, we find $t_1=t_2$ and $\tau_1=\tau_2$. 
			If $t_0\in\{0,1\}$ we can argue in the same way, so $t\mapsto u(t)$ and 
			$t\mapsto v(t)$ are continuous in $[0,1]$.
			Summarizing, $u$ and $v$ are continuous and increasing in [0,1] with $u(1)=1$ and 
			$v(1)=1$.
			For later use we define $h^{[n]}_{t,\tau}:=f_{u^*_n(t),v^*_n(\tau)}$ and 
			$h_{t,\tau}:=f_{u(t),v(\tau)}$.
		\item Next we show that for every fixed $\epsilon >0$, fixed $t\in S$ and a fixed
			partition $Z\subset S$ of the interval $[0,t]$, there exists an $n_0\in\IN$ so that 
			\[
				|S_1(u^*_n,v^*_n,t,Z) - S_1(u,v,t,Z)| < \epsilon
			\]
			holds for all $n\ge n_0$, where $Z=\{t_0,t_1,\ldots, t_s\}$. 
			
			Fix $\epsilon>0$.
			As the function $(t,\tau)\mapsto \lmr(t,\tau)$ is (uniformly) continuous in 
			$[0,T^*]^2$ by Lemma \ref{Pro:ComConf} a), there exists $\delta >0$ such that 
			\begin{align*} 
				|\lmr(\lt,\ltau)-\lmr(\gt,\gtau)|<\frac{\epsilon}{2s} 
				\quad\text{whenever} \quad |\lt-\gt|,\, |\ltau-\gtau|<\delta.
			\end{align*}
			Since $Z\subset S$, we find an $n_0\in\IN$ so that $|u_n^*(t_l)-u(t_l)|$,
			$|v_n^*(t_l)-v(t_l)|<\delta$ holds for all $l=0,\ldots, s$ and $n\ge n_0$.
			Consequently we find 
			% \begin{align*}
				% |S_1(u^*_n,&v^*_n,t,Z) - S_1(u,v,t,Z)|\\
				% &= \Big|\sum_{l=0}^{s-1} \lmr\big(u^*_n(t_{l+1}),v^*_n(t_l)\big)-
						% \lmr\big(u^*_n(t_l),v^*_n(t_l)\big)\\
				% &\hspace{0.5cm}+ \sum_{l=0}^{s-1} \lmr\big(u(t_{l+1}),v(t_l)\big)-
					% \lmr\big(u(t_l),v(t_l)\big)\Big|\\
				% &\le \sum_{l=0}^{s-1} \big|\lmr\big(u^*_n(t_{l+1}),v^*_n(t_l)\big)-
						% \lmr\big(u(t_{l+1}),v(t_l)\big)\big|\\
				% &\hspace{0.5cm}+ \sum_{l=0}^{s-1} \big|\lmr\big(u^*_n(t_l),v^*_n(t_l)\big)-
					% \lmr\big(u(t_l),v(t_l)\big)\big|\le 2 s\cdot \frac{1}{s} = 2\epsilon.
			% \end{align*}
			\begin{align*}
				|&S_1(u^*_n,v^*_n,t,Z) - S_1(u,v,t,Z)|\\
				&= \Big|\sum_{l=0}^{s-1} \lmr(h^{[n]}_{t_{l+1},t_l})- \lmr(h^{[n]}_{t_l,t_l})
					+ \sum_{l=0}^{s-1} \lmr(h_{t_{l+1},t_l})-\lmr(h_{t_l,t_l})\Big|\\
				&\le \sum_{l=0}^{s-1} \big|\lmr(h^{[n]}_{t_{l+1},t_l})-\lmr(h_{t_{l+1},t_l})\big|
					+ \sum_{l=0}^{s-1} \big|\lmr(h^{[n]}_{t_l,t_l})-\lmr(h_{t_l,t_l})\big|
					\le 2 s \frac{\epsilon}{2s} = \epsilon.
			\end{align*}
		\item For now we fix $t\in S$. We show that for all $\epsilon >0$ we find 
			a $\mu>0$ so that for all partitions $Z\subset S$ of $[0,t]$ with $|Z|<\mu$ 
			there exists an $m_0 \in \IN$ so that for all $n\ge m_0$ we have
			\[
				|S_1(u^*_n,v^*_n,t,Z)-\lambda_0 t|<\epsilon.
			\]
			
			Let $\epsilon>0$. Then there exists $\delta>0$ such that the inequality from 
			Lemma \ref{Pro:DifQuoAbs2} c) holds. Since  the functions $t\mapsto u(t)$ 
			and $t\mapsto v(t)$ are (uniformly) continuous we get
			\begin{align*}
				\exists \mu>0:\; |\lt-\gt|<\mu \Rightarrow
				|u(\lt)-u(\gt)|, |v(\lt)-v(\gt)|<\frac{\delta}{2}.
			\end{align*}
			Denote by $Z=\{t_0,\ldots,t_s\}$ a partition of $[0,t]$ with $|Z|<\mu$ and 
			$Z\subset S$. 
			Then we find an $m_0\in\IN$ with $Z\subset S^*_n$, $t\in S^*_n$ and
			\[
				|u_n^*(t_l)-u(t_l)|,\, |v_n^*(t_l)-v(t_l)|< \frac{\delta}{4} 
			\]
			for all $n\ge m_0$ and all $l=0,\ldots, s$.
			As a consequence we get
			\begin{multline*}
				|u^*_n(t_{l+1})-u^*_n(t_l)|\\
					\le |u^*_n(t_{l+1})-u(t_{l+1})| + |u(t_{l+1})-u(t_l)| + |u(t_l)-u^*_n(t_l)|< 
					\frac{\delta}{4} + \frac{\delta}{2} + \frac{\delta}{4} = \delta
			\end{multline*}
			for all $n\ge m_0$ and all $l=0,\ldots, s$. In an analog way we get 
			$|v^*_n(t_{l+1})-v^*_n(t_l)|<\delta$.
			Next, we set $S^*_n(t):=S^*_n\cap[0,t]$. 
			$S^*_n(t)$ is a partition of the interval 
			$[0,t]$ and we write $S^*_n(t)=\{t^*_0,\ldots, t^*_{s^*}\}$.
			\begin{align*}
				|\lambda^*_n t-&S_1(u^*_n,v^*_n,t,Z)| = |S_1(u^*_n,v^*_n,t,S^*_n(t)) - 
					S_1(u^*_n,v^*_n,t,Z)|\\					
				&=\sum_{j=0}^{s^*-1}\big|[\lmr(h^{[n]}_{t^*_{j+1},t^*_j})-
					\lmr(h^{[n]}_{t^*_j,t^*_j})]-
					[\lmr(h^{[n]}_{t^*_{j+1},\phi(t^*_j)})-\lmr(h^{[n]}_{t^*_j,\phi(t^*_j)})]\big|
			\end{align*}
			where $\phi(t^*_j):=t_l$ if $t^*_j\in[t_l,t_{l+1})$ with $j=0,\ldots,s^*$ and 
			$l=0,\ldots,s$. Since $|v^*_n(t_{l+1})-v^*_n(t_l)|<\delta$, we have 
			$|v^*_n(\phi(t_l))-v^*_n(t_l)|<\delta$ for all $n\ge m_0$ and all 
			$l=0,\ldots, s$. Thus we get
			\begin{align*}
				|\lambda^*_n t-&S_1(u^*_n,v^*_n,t,Z)|\\
				&= \sum_{j=0}^{s^*-1} |\lmr(h^{[n]}_{t^*_{j+1},t^*_j})-\lmr(h^{[n]}_{t^*_j,t^*_j})|
					\cdot \left|1-\frac{\lmr(h^{[n]}_{t^*_{j+1},\phi(t^*_j)})-
					\lmr(h^{[n]}_{t^*_j,\phi(t^*_j)})}{\lmr(h^{[n]}_{t^*_{j+1},t^*_j})-
					\lmr(h^{[n]}_{t^*_j,t^*_j})} \right|
			\end{align*}
			Since $|u^*_n(t^*_{j+1})-u^*_n(t^*_j)|<\delta$ and 
			$|v^*_n(\phi(t^*_j))-v^*_n(t^*_j)|<\delta$ for all $n\ge m_0$ and all 
			$l=0,\ldots, s$, we have by Lemma \ref{Pro:DifQuoAbs2} c)
			\[
				|\lambda^*_n t-S_1(u^*_n,v^*_n,t,Z)| \le \epsilon 
				\sum_{j=0}^{s^*-1} \big(\lmr(h^{[n]}_{t^*_{j+1},t^*_j})-
					\lmr(h^{[n]}_{t^*_j,t^*_j})\big)\le\epsilon
			\]
			for all $n\ge m_0$. The last inequality can be proven by using the monotonicity of
			$(t,\tau)\mapsto\lmr(t,\tau)$ as follows
			\begin{align*}
				\sum_{j=0}^{s^*-1} &\big(\lmr(h^{[n]}_{t^*_{j+1},t^*_j})-
					\lmr(h^{[n]}_{t^*_j,t^*_j})\big)\\[-0.5\baselineskip]
				&\le \sum_{j=0}^{s^*-1} \big(\lmr(h^{[n]}_{t^*_{j+1},t^*_j})-
					\lmr(h^{[n]}_{t^*_j,t^*_j})\big)
				+\sum_{j=0}^{s^*-1} \big(\lmr(h^{[n]}_{t^*_{j+1},t^*_{j+1}})-
					\lmr(h^{[n]}_{t^*_{j+1},t^*_j})\big)\\
				&= \lmr(h^{[n]}_{t^*_{s^*},t^*_{s^*}}) - \lmr(h^{[n]}_{t^*_0,t^*_0}) = 
					\lmr(h^{[n]}_{t,t}) = t \le 1
			\end{align*}
			The assertion follows now, since $\lambda^*_n$ converges to $\lambda_0$.
		\item If we put 3) and 4) together we find for every $\epsilon>0$ a $\mu>0$ so that for all
			partitions $Z\subset S$ of the interval $[0,t]$ with $|Z|<\mu$ the 
			inequality 
			\begin{align} \label{Equ:4}
				|S_1(u,v,t,Z)-\lambda_0 t|<\epsilon
			\end{align}
			holds.
			
			As a consequence of this, we will show next that $\lambda_0\ne 0$. \\
			So let us assume the opposite, namely $S_1(u,v,1,Z)\rightarrow 0$ for 
			$|Z|\rightarrow 0$, where $Z\subset S$ denotes a partition of the interval $[0,1]$.
			Let $\delta>0$ such that the inequality from Proposition \ref{Pro:DifQuoAbs2} c) holds for  
			$\epsilon=\frac{1}{2}$. Since $t\mapsto u(t)$ and $t\mapsto v(t)$ are
			(uniformly) continuous in $[0,1]$, we get
			\[
				\exists \mu>0:\, |\lt-\gt|<\mu \Rightarrow |u(\lt)-u(\gt)|,\, 
				|v(\lt)-v(\gt)|<\delta
			\]	
			Denote by $Z:=\{t_0,\ldots,t_s\}\subset S$ a partition of the interval 
			$[0,1]$ with $|Z|<\mu$.
			Consequently, we have for all $n\in\IN$ with $Z\subset S_n:=\{t^*_0,\ldots,t^*_{2^n}\}$ 
			as before
			\begin{align*}
				0<\sigma:=S_1(u,v,1,Z)=\sum_{j=0}^{2^n-1} \lmr(h_{t^*_{j+1},\phi(t^*_j)})-
				\lmr(h_{t^*_j,{\phi(t^*_j)}}),
			\end{align*}
			where $\phi(t^*_j):=t_l$ if $t^*_j\in[t_l,t_{l+1})$ with $j=0,\ldots,2^n$ and 
			$l=0,\ldots,s$. Hence $|v(\phi(t^*_l))-v(t^*_l)|<\delta$, so we get by 
			Lemma \ref{Pro:DifQuoAbs2} c)
			\begin{align*}
				S_1(u,v,1,Z) \le (1+\epsilon)\sum_{j=0}^{2^n-1} \lmr(h_{t^*_{j+1},t^*_j})-
				\lmr(h_{t^*_j,t^*_j}) = (1+\epsilon)S_1(u,v,1,S_n)
			\end{align*}
			Thus we get $\frac{2}{3}\sigma\le S_1(u,v,1,S_n)$, so $S_1(u,v,1,S_n)$ does not tend to
			zero as $n$ tends to infinity. This is a contradiction, so $\lambda_0\ne 0$.
		\item Next we will show, as another consequence of equation (\ref{Equ:4}), that the 
			function $t\mapsto u(t)$ is strictly increasing.
			For this purpose, we assume the opposite. Let $t_1<t_2$ with
			$u(t_1)=u(t_2)$. Without loss of generality we can assume $t_1,t_2\in S$.
			So we find $n_0\in\IN$ with $t_1,t_2\subset S_n$ for all $n\ge n_0$. 
			Let $\epsilon:=\frac{1}{2}(t_2-t_1)\lambda_0$. Then there exist $\mu_1,\mu_2>0$ such 
			that
			\[
				|S_1(u,v,t_1,Z_1)-\lambda_0 t_1|<\epsilon\quad \text{and} \quad
				|S_1(u,v,t_2,Z_2)-\lambda_0 t_2|<\epsilon
			\]
			for all partitions $Z_1$ of $[0,t_1]$ and $Z_2$ of $[0,t_2]$ with $|Z_1|<\mu_1$
			and $|Z_2|<\mu_2$. Consequently we find an $m_0\ge n_0$ with 
			$|S_{m_0}|<\min(\mu_1,\mu_2)$, so we get
			\[
				\lambda_0(t_2-t_1) < 2\epsilon,
			\]
			since $S_1(u,v,t_1,S_{m_0}(t_1)) = S_1(u,v,t_2,S_{m_0}(t_2))$. 
			This is a contradiction to $\lambda_0\ne 0$, so
			$t\mapsto u(t)$ needs to be strictly increasing.
			
			By applying the steps 3) - 6) to the second slit with $\tilde S_2$ instead of $S_1$, we 
			get $\lambda_0\ne 1$ and the strict monotonicity of $t\mapsto v(t)$.
		\item Since $u$ and $v$ are strictly increasing self-homeomorphisms of $[0,1]$, 
			we can apply Proposition \ref{Pro:ProCalLam} (for $a=u$ and $b=v$) 
			to get Lipschitz continuous and increasing functions $c_1$ and $c_2$. 
			By equation (\ref{Equ:4}) we see
			\[
				c_1(t)=\lambda_0 t,\quad c_2(t)=(1-\lambda_0)t
			\]
			for all $t\in S$. Since $S$ is dense in $[0,1]$ this relation holds for 
			all $t\in[0,1]$. \\
			
			Finally, from Proposition \ref{Pro:ProCalLam} it follows that for every $z\in \Omega_1$ the differential equation			
			\[\label{eq:twoslits}
				\dot h_t(z) = h_t(z)\big( \lambda_0 \Phi(\xi_1(t),h_t(z),D_t) + (1-\lambda_0)
					\Phi(\xi_2(t),h_t(z),D_t)\big)
			\]
			holds for all $t\in[0,1]$, where $h_t:=h_{t,t}$ and $\xi_j$ is the driving function for $\Gamma_j.$
			
			As $t\mapsto \gamma_1(u(t))$ and $t\mapsto \gamma_2(v(t))$ are continuous,
			the continuity of the driving functions $t\mapsto \xi_1(t)$ and 
			$t\mapsto \xi_2(t)$ follows directly from Proposition 7 in \cite{BoehmLauf}.
	\end{selflist}
\end{proof}
	\section{Uniqueness}
\begin{proof}[Proof of Theorem \ref{The:ConCoeffi} (Uniqueness)] 
	Let us denote by $u_1, u_2, v_1,v_2$ increasing self-homeomorphisms of $[0,1]$ with 
	$\lmr(u_1(t),v_1(t))=\lmr(u_2(t),v_2(t))=t$ for all $t\in[0,1]$ such that the functions 
	$h_t:= f_{u_1(t),v_1(t)}$ and $g_t:=f_{u_2(t),v_2(t)}$ satisfy the differential equations 
	\begin{eqnarray*}
		\dot h_t(z) &=& h_t(z)\big( \lambda_1 \Phi(\xi_1(t),h_t(z),D_t) + (1-\lambda_1)
					\Phi(\xi_2(t),h_t(z),D_t)\big),\\
		\dot g_t(z) &=& g_t(z)\big( \lambda_2 \Phi(\zeta_1(t),g_t(z),E_t) + (1-\lambda_2)
					\Phi(\zeta_2(t),g_t(z),E_t)\big)
	\end{eqnarray*} 
	for all $t\in[0,1]$ with coefficients $0<\lambda_2\le\lambda_1<1$ and 
	continuous driving functions $\xi_1,\xi_2,\zeta_1,\zeta_2,$ 
	where $\xi_j$ and $\zeta_j$ correspond to the slit $\Gamma_j.$
	The continuity of the driving functions is an immediate consequence of Proposition 7 
	in \cite{BoehmLauf}, since $u_1, u_2, v_1,v_2$ are increasing self-homeomorphisms of $[0,1]$.
	By $D_t$ and $E_t$ we denote 
	the circular slit disks that are uniquely determined by $u_1, v_1$ and $u_2, v_2$ respectively.
	\begin{selflist}
		\item First of all we will show $\lambda_1=\lambda_2$, so let us assume 
			$\lambda_1>\lambda_2$.
			
			The differential equations immediately imply $h_t'(0)=g_t'(0)=e^t$ for all $t\in[0,1]$ 
			and by Proposition \ref{Pro:ProCalLam} we get Lipschitz-continuous functions 
			$c_1$ and $c_2$ for the case $a=u_1$ and $b=v_1$. 
			These functions are differentiable a.e. and it holds 
			\[
				\dot c_1(t) = \lambda_1, \quad 
				\dot c_2(t) = 1-\lambda_1, \quad \text{a.e. in} \; [0,1].
			\]
			 This is based on the fact, that the functions 
			\[
				z\mapsto \Phi(\xi_1(t),h_t;D_t),\quad
				z\mapsto \Phi(\xi_2(t),h_t;D_t)
			\]
			are for fixed $t$ linear independent and equation (\ref{KLE2}).\\
			Analogously, Proposition \ref{Pro:ProCalLam} gives us two Lipschitz-continuous 
			functions $d_1$ and $d_2$ for the case $a=u_2$ and $b=v_2$ with 
			\[ 
				\dot d_1(t) = \lambda_2, \quad  \dot d_2(t)= 1-\lambda_2 \quad \text{a.e. in} 
				\;[0,1]. 
			\]
			 
			An immediate consequence of the Lipschitz continuity is 
			$$	
				c_1(t)=\lambda_1 t,\; d_1(t)=\lambda_2 t,\; 	
				c_2(t)=(1-\lambda_1)t,\; d_2(t)=(1-\lambda_2)t \quad 
				\text{for all} \; t\in[0,1].
			$$
			Furthermore we set
			\[
				x_j(t):=\lmr\big(u_j(t),v_j(0)\big) = \lmr\big(u_j(t),0\big)
			\]
			for $j=1,2$. Denote by $0<t_0\le 1$ the first positive time when $u_1(t_0)=u_2(t_0)$.
			Consequently $v_1(t_0)=v_2(t_0)$ and $x_1(t_0)=x_2(t_0)$ by normalization and the
			monotonicity of $(t,\tau)\mapsto \lmr(t,\tau)$ in each variable.
			Since 
			$$
				\dot x_1(0) = \lambda_1 > \lambda_2 = \dot x_2(0)
			$$ 
			by Proposition \ref{Pro:ProCalLam}, we have $x_1(t)>x_2(t)$
			and as a consequence $u_1(t)>u_2(t)$ for all $t\in (0,t_0)$. Consequently we have also 
			$\lmr\big(u_1(t),v_1(t_0)\big)>\lmr\big(u_2(t),v_2(t_0)\big)$ for all 
			$t\in(0,t_0)$. Thus we get
			\begin{multline*} 
				\lmr\big(u_2(t),v_2(t_0)\big) < \lmr\big(u_1(t),v_1(t_0)\big) < 
				\lmr\big(u_1(t_0),v_1(t_0)\big) = \lmr\big(u_2(t_0),v_2(t_0)\big) = t_0
			\end{multline*}
			if $t<t_0$. This implies
			\[
				\frac{\lmr\big(u_1(t_0),v_1(t_0)\big)-\lmr\big(u_1(t),v_1(t_0)\big)}{t_0-t} < 
				\frac{\lmr\big(u_2(t_0),v_2(t_0)\big)-\lmr\big(u_2(t),v_2(t_0)\big)}{t_0-t}
			\]
			for all $t<t_0$. If $t$ tends to $t_0$ we get $\lambda_1\le \lambda_2$ 
			by Proposition \ref{Pro:ProCalLam}. This is a contradiction, so $\lambda_1=\lambda_2=:\lambda$.
		\item Next we prove the uniqueness of the parametrizations $u$ and $v$, i.e. we show
			$u_1\equiv u_2$. By using the result from the first part, we have
			\[
				c_1(t)=d_1(t) = \lambda t,\quad 
				c_2(t)=d_2(t) = (1-\lambda)t,
			\]
			for all $t\in[0,1]$.\\
			
			Again, we extend both $\gamma_1$ and $\gamma_2$ to an interval 
			$[0,T^*],$ $T^*>1,$ such that $\gamma_1[0,T^*]$ and $\gamma_2[0,T^*]$ are still 
			disjoint slits and $\lmr(T^*,0)\geq1,$ $\lmr(0,T^*)\geq 1.$ 
			Let $t\in(0,1)$ be fixed. Next we denote by $t_0=0<t_1<\ldots<t_n$
			and $\tilde t_0=0<\tilde t_1<\ldots< \tilde t_n$ the unique values such that
			\begin{eqnarray*}
				&&\lmr\big(u_1(t_{l+1}),v_1(t_l)\big) - \lmr\big(u_1(t_l),v_1(t_l)\big)\\ &=& 
				\lmr\big(u_2(\tilde t_{l+1}),v_2(\tilde t_l)\big) - 
					\lmr\big(u_2(\tilde t_l),v_2(\tilde t_l)\big) = 
				\frac{\lambda}{n} t
			\end{eqnarray*}
			holds. 
			By induction, it is easy to see that
			$$
				u_1(t_l)= u_2(\tilde t_l) \quad \text{and} \quad 
				v_1(t_l) = v_2(\tilde t_l) \quad \text{for all}\;\, l=1,\ldots,n.
			$$
			Furthermore, the values $|t_{l+1}-t_l|$ and $|\tilde t_{l+1}-\tilde t_l|$ 
			$(l=0,\ldots, n-1)$ become arbitrary small, if $n$ is big enough.
			Since $t_n$ and $\tilde t_n$ are bounded by $T^*$, we find convergent subsequences 
			$(t_{n_j})_{j\in\IN}$ and $(\tilde t_{n_j})_{j\in\IN}$. 
			The limit of both sequences needs to be $t$ since the sum
			\[ 
				\sum_{l=0}^{n-1} \lmr\big(u_j(t_{l+1}),v_j(t_l)\big) - 
				\lmr\big(u_j(t_l),v_j(t_l)\big),
			\]
			converges to $c_1(t)=d_1(t)=\lambda t$ for each $j=1,2$. 
			Finally, as a consequence of the continuity of $t\mapsto u_j(t)$, we get 
			$u_1(t)=u_2(t)$, so the proof is complete.
	\end{selflist}
\end{proof}

	\bibliography{Input/literature}{}
	\bibliographystyle{amsplain}

\end{document}